\begin{document}

\title{The coset and stability rings}

\author{\tsname}
\address{\tsaddress}
\email{\tsemail}

\begin{abstract} 
We show that if $G$ is a discrete Abelian group and $A \subset G$ has $\|1_A\|_{B(G)} \leq M$ then $A$ is $O(\exp(\pi M))$-stable in the sense of Terry and Wolf.
\end{abstract}

\maketitle

\setcounter{section}{1}

In \cite{terwol::0} Terry and Wolf, inspired by ideas in model theory, introduce the notion of stability for sets in Abelian groups, and very quickly after there followed a number of papers building on their work \emph{e.g.} \cite{conpilter::0,alofoxzha::0,sis::1} and \cite{terwol::1}.  In this note we develop a relationship between stability and the Fourier algebra.

Suppose that $G$ is a (possibly infinite) Abelian group.  Following \cite[Definition 1]{terwol::1}, for $k \in \N$ we say $A \subset G$ has the \textbf{$k$-order property} if there are vectors $a,b \in G^k$ such that $a_i +b_j \in A$ if and only if $i \leq j$.  If $A$ does not have the $k$-order property it is said to be \textbf{$k$-stable}.  Note that the order property is monotonic so if $A$ has the $k$-order property then it has the $(k-1)$-order property (for $k \geq 2$), and mutatis mutandis for stability.

Write $\mathcal{S}_l(G)$ for the set of subsets of $G$ that are $l$-stable and $\mathcal{S}(G)$ for their union over all $l \in \N$.  We begin with some examples from \cite{terwol::0}:
\begin{lemma}[The empty set and cosets]\label{lem.cs}
$\mathcal{S}_1(G)=\{\emptyset\}$ and $\mathcal{S}_2(G)=\mathcal{S}_1(G)\cup \bigcup_{H \leq G}{G/H}$.
\end{lemma}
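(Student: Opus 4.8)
The plan is to unwind the definition of the $k$-order property for the two relevant values $k=1,2$ and convert $k$-stability into a closure condition on $A$.

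For $\mathcal{S}_1(G)$ this is immediate: with $k=1$ the only index pair is $(1,1)$, and since $1\le 1$ the $1$-order property merely asserts the existence of $a_1,b_1\in G$ with $a_1+b_1\in A$. Such elements exist exactly when $A\ne\emptyset$, so the $1$-stable sets are precisely $\{\emptyset\}$.

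For $\mathcal{S}_2(G)$, the $2$-order property asks for $a_1,a_2,b_1,b_2\in G$ with $a_1+b_1,\ a_1+b_2,\ a_2+b_2\in A$ but $a_2+b_1\notin A$. Setting $p=a_1+b_1$, $q=a_1+b_2$, $s=a_2+b_2$ we have $a_2+b_1=p-q+s$; conversely, given any $p,q,s\in G$ one recovers a valid tuple by taking $a_1=0$, $b_1=p$, $b_2=q$, $a_2=s-q$. Hence $A$ has the $2$-order property if and only if there exist $p,q,s\in A$ with $p-q+s\notin A$; equivalently, $A$ is $2$-stable if and only if $p-q+s\in A$ whenever $p,q,s\in A$. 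I would flag this equivalence as the one point deserving care: it is genuinely two-sided, so one must verify both that a witness for the order property yields such a triple and that every such triple is realised by some tuple.

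It then remains to identify the nonempty subsets of $G$ closed under $(p,q,s)\mapsto p-q+s$ with the cosets of subgroups of $G$. If $A=x+H$ with $H\le G$, then for $p,q,s\in x+H$ the difference $p-q$ lies in $H$, so $p-q+s\in x+H$; thus every coset is $2$-stable, as is $\emptyset$ trivially. Conversely, suppose $A\ne\emptyset$ is closed under this operation; fix $x\in A$ and put $H:=A-x$. Then $0\in H$, and for $a,a'\in A$ the identity $(a-x)-(a'-x)=(a-a'+x)-x$ together with $a-a'+x\in A$ shows $H$ is closed under subtraction, hence is a subgroup, whence $A=x+H$. Combining the two directions with the description of $\mathcal{S}_1(G)$ gives $\mathcal{S}_2(G)=\{\emptyset\}\cup\bigcup_{H\le G}G/H$.
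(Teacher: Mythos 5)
Your proof is correct and follows essentially the same route as the paper: both reduce $2$-stability to the closure condition $A+A-A\subset A$ via an explicit choice of witnesses, and then identify the nonempty closed sets with cosets. The only difference is that you verify the two outsourced facts (cosets are $2$-stable, and a nonempty set closed under $p-q+s$ is a coset) directly rather than citing them, which makes the argument self-contained.
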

\begin{proof}
The first equality is immediate.  For the second, from \cite[Example 1 \& Lemma 2]{terwol::0} (or Lemma \ref{ex.duc} later) we have $\bigcup_{H \leq G}{G/H} \subset \mathcal{S}_2(G)$.  And conversely if $A$ is $2$-stable and $x,y,z \in A$ then putting $a_1=x$, $a_2=y$, $b_1=z-x$ and $b_2=0_G$ we see that $a_1+b_1,a_1+b_2,a_2+b_2 \in A$ by design.  Since $A$ does \emph{not} have the $2$-order property it follows that $y+z-x=a_2+b_1 \in A$, and so $A+A-A \subset A$ and if $A$ is non-empty it follows (by \emph{e.g.} \cite[\S3.7.1]{rud::1}) that $A$ is a coset of a subgroup.  The result is proved.
\end{proof}
More interesting than the examples, Terry and Wolf show that $\mathcal{S}(G)$ has a ring structure.  Recall that $\mathcal{R}$ is a \textbf{ring of subsets of $G$} if $\mathcal{R} \subset \mathcal{P}(G)$ is closed under complements and finite intersections (and hence finite unions).  The prototypical example is $\mathcal{P}(G)$ itself; \cite[Lemmas 1 \& 2]{terwol::0} give the following.
\begin{theorem}[Terry--Wolf Stability Ring]\label{thm.twstr}
Suppose that $G$ is an Abelian group.  Then $\mathcal{S}(G)$ is a translation-invariant ring of subsets of $G$.
\end{theorem}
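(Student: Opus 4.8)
The plan is to verify the three closure properties in turn: translation-invariance, closure under complementation, and closure under pairwise intersection; closure under finite intersection then follows by induction, and closure under finite union follows formally by De Morgan. Translation-invariance is immediate from the definition: if $a,b \in G^k$ witness the $k$-order property for $A$, then $a$ and $b+t$ witness it for $A+t$, since $a_i+(b_j+t) \in A+t$ if and only if $a_i+b_j \in A$. Hence each $\mathcal{S}_l(G)$, and therefore $\mathcal{S}(G)$, is invariant under translation.

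For complementation I would first record the routine bookkeeping that lets one pass between the ``$i \leq j$'' and ``$i < j$'' forms of the order property and reverse the order of the indices. If $A^c$ has the $k$-order property, witnessed by $a,b \in G^k$ with $a_i+b_j \in A^c$ if and only if $i \leq j$ (equivalently $a_i+b_j \in A$ if and only if $i>j$), then reversing indices via $i \mapsto k+1-i$ turns this into $a'_i+b'_j \in A$ if and only if $i<j$, and then passing to the index set $\{1,\dots,k-1\}$ after relabelling one vector by a shift of one coordinate turns the strict inequality into ``$\leq$''. Thus $A^c$ having the $k$-order property forces $A$ to have the $(k-1)$-order property; contrapositively, if $A$ is $l$-stable then $A^c$ is $(l+1)$-stable.

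The substantive step is pairwise intersection, and this is where I expect the real work to lie: it is the combinatorial incarnation of the fact that stability is preserved under Boolean combinations, and it will go through Ramsey's theorem. Suppose $A \cap B$ has the $k$-order property, witnessed by $a,b \in G^k$ with $a_i+b_j \in A \cap B$ if and only if $i \leq j$. For an edge $\{i,j\}$ of the complete graph on $\{1,\dots,k\}$ with $i<j$, consider the sub-diagonal element $a_j+b_i$, which lies outside $A \cap B$, and colour the edge according to whether $a_j+b_i \notin A$ (first colour) or $a_j+b_i \in A$, and hence $\notin B$ (second colour). If $k$ is at least the Ramsey number for a monochromatic clique of size $m$ under two colours, we obtain $S=\{s_1<\dots<s_m\}$ all of whose edges share a colour; restricting the witnessing vectors to the coordinates indexed by $S$ then exhibits the $m$-order property for $A$ in the first colour class and for $B$ in the second, since on and above the diagonal both sets contain the relevant sums while below the diagonal the chosen colour makes the appropriate set fail. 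Consequently, if $A$ is $l_1$-stable and $B$ is $l_2$-stable then, taking $m=\max(l_1,l_2)$, the set $A\cap B$ cannot have the $R(m)$-order property, so $A\cap B \in \mathcal{S}(G)$.

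Finally I would assemble the pieces. The collection $\mathcal{S}(G)$ is nonempty (it contains $\emptyset$ by Lemma~\ref{lem.cs}), is closed under complements and pairwise — hence, by induction, finite — intersections, and is closed under finite unions since $A\cup B=(A^c\cap B^c)^c$; together with translation-invariance this is precisely the assertion. The only genuinely delicate point is the intersection step, and there the care needed is in arranging the Ramsey colouring so that a single monochromatic set simultaneously controls the sub-diagonal behaviour of whichever of $A$ or $B$ survives; everything else is index bookkeeping.
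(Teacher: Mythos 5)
Your proposal is correct. Note that the paper does not actually prove Theorem \ref{thm.twstr} itself --- it is quoted from \cite[Lemmas 1 \& 2]{terwol::0} --- so the fair comparison is with the quantitative machinery the paper does develop, namely the complement bound it records from \cite[Lemma 1]{terwol::0} and the Ramsey argument of Lemma \ref{lem.multicolour}. Your complement step reproduces the former exactly (with the same sharp loss of one). For the Boolean step you work with intersections, colouring the \emph{sub}-diagonal entries $a_j+b_i$ by which of $A$ or $B$ they fail to lie in; the paper's Lemma \ref{lem.multicolour} instead treats unions, colouring the \emph{super}-diagonal entries by which $A_q$ they lie in. The two are dual via De Morgan and either suffices for the theorem. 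A small advantage of your choice is that the diagonal takes care of itself ($a_i+b_i\in A\cap B$ is already in both sets, so no reindexing is needed), whereas the union version must discard a row and shift indices by one because the diagonal elements lie in the union without a controlled colour; the union version has the compensating advantage of extending painlessly to $m$ sets with the multicolour Ramsey number, which is what the paper needs for its quantitative bound following (\ref{eqn.ramsey}). Your assembly of the pieces (induction for finite intersections, $A\cup B=(A^c\cap B^c)^c$ for unions, translation-invariance from the definition) is all fine.
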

It may help to compare this with \emph{e.g.} \cite[Exercise 8.2.9]{tenzie::}, the folklore fact that the set of stable formulas is closed under boolean combinations.

We write $\mathcal{W}(G)$ for the \textbf{coset ring} of $G$, that is the minimal translation-invariant ring of sets containing all cosets of subgroups of $G$.  The coset ring has received attention in harmonic analysis (see \cite[Chapters 3 and 4]{rud::1}), and in view of Lemma \ref{lem.cs} and Theorem \ref{thm.twstr} we have $\mathcal{W}(G) \subset \mathcal{S}(G)$; it is natural to ask whether we have equality.

For any $A \subset G$ we have $A=\bigcup_{x\in A}{(x+\{0_G\})}$ and so if $G$ is finite any set is a finite union of cosets of subgroups of $G$ and in particular $\mathcal{W}(G)=\mathcal{S}(G)$, but if $G$ is not finite then things may be different.  To see this we need a new example of sets of low stability.

Following\footnote{Deviating from other definitions \emph{e.g.} \cite[Definition 4.26]{taovu::} if $G$ has $2$-torsion.} \cite[Definition 1]{cil::0} we say a set $A \subset G$ is a \textbf{Sidon set} (also known as a $B_2$-set) if whenever $x-y=z-w$ for some $(x,y,z,w) \in A^4$ we have $x=y$ or $x=z$.  
\begin{lemma}[Sidon sets are $3$-stable]
Suppose that $G$ is an Abelian group and $A \subset G$ is a Sidon set.  Then $A$ is $3$-stable.
\end{lemma}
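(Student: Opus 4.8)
The plan is to argue by contradiction. Suppose $A$ is Sidon but has the $3$-order property, witnessed by vectors $a,b \in G^3$ with $a_i + b_j \in A$ if and only if $i \leq j$. I would first record what this pattern says: the six points $a_1+b_1,\,a_1+b_2,\,a_1+b_3,\,a_2+b_2,\,a_2+b_3,\,a_3+b_3$ all lie in $A$, while the three points $a_2+b_1,\,a_3+b_1,\,a_3+b_2$ do not.

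The key step is to exhibit an additive quadruple among the six members of $A$ to which the Sidon hypothesis can be applied. The natural ``rectangle'' is $(a_1+b_2)-(a_1+b_3) = (a_2+b_2)-(a_2+b_3)$, and all four of $a_1+b_2,\ a_1+b_3,\ a_2+b_2,\ a_2+b_3$ lie in $A$ since each of the index pairs $(1,2),(1,3),(2,2),(2,3)$ satisfies $i \le j$. Taking $(x,y,z,w) = (a_1+b_2,\ a_1+b_3,\ a_2+b_2,\ a_2+b_3) \in A^4$, the Sidon property forces $x=y$ or $x=z$, that is, either $b_2 = b_3$ or $a_1 = a_2$.

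Finally I would rule out both alternatives using the non-membership entries of the pattern. If $b_2 = b_3$ then $a_3 + b_2 = a_3 + b_3 \in A$, contradicting $a_3 + b_2 \notin A$ (as $3 \not\leq 2$). If instead $a_1 = a_2$ then $a_2 + b_1 = a_1 + b_1 \in A$, contradicting $a_2 + b_1 \notin A$ (as $2 \not\leq 1$). In either case we obtain a contradiction, so $A$ cannot have the $3$-order property, i.e.\ $A$ is $3$-stable.

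I do not expect a serious obstacle here: the argument is a short combinatorial computation with the membership pattern. The only point requiring a little care is the choice of quadruple — one wants the collapse it produces (either $b_2=b_3$ or $a_1=a_2$) to land on a $0$-entry of the pattern rather than a $1$-entry, which is exactly why the pair of columns $\{2,3\}$ together with the pair of rows $\{1,2\}$ is the right one to use, leaving rows/columns $1$ and $3$ free to furnish the contradiction.
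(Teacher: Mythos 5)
Your proposal is correct and follows essentially the same route as the paper: the same quadruple $(a_1+b_2,a_1+b_3,a_2+b_2,a_2+b_3)$, the same difference identity yielding $b_2=b_3$ or $a_1=a_2$, and the same two contradictions via $a_3+b_2\notin A$ and $a_2+b_1\notin A$. Nothing to add.
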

\begin{proof}
Suppose that $a,b \in G^3$ witness the $3$-order property in $A$.  Then $a_i+b_j \in A$ whenever $i \leq j$, and so $(a_1+b_2, a_1+b_3, a_2+b_2, a_2+b_3) \in A^4$.  But then
\begin{equation*}
(a_1+b_2)-(a_1+b_3)=b_2-b_3 = (a_2+b_2)-(a_2+b_3),
\end{equation*}
so by Sidonicity either $a_1+b_2=a_1+b_3$ and we have $b_2=b_3$; or $a_1+b_2=a_2+b_2$ and we have $a_1=a_2$.  In the former case we have $a_3+b_2 = a_3+b_3 \in A$ -- a contradiction.  In the latter we have that $a_2+b_1=a_1+b_1 \in A$ -- a contradiction.  It follows that $A$ is $3$-stable.
\end{proof}
On the other hand there are (at least if $|G|>3$) sets in $\mathcal{S}_3(G) \setminus \mathcal{S}_2(G)$ that are not Sidon, for example a subgroup of size at least $4$ with the identity removed.\footnote{Such a set is the intersection of a subgroup with the complement of a subgroup.  The former is $2$-stable (as recorded in Lemma \ref{lem.cs}); the latter is $3$-stable since subgroups are $2$-stable and the complements of $k$-stable sets are $(k+1)$-stable by \cite[Lemma 1]{terwol::0}.  It follows by \cite[Lemma 3]{terwol::0} that the resulting intersection is $3$-stable.  On the other hand it is not a coset of a subgroup, and so not $2$-stable, and not Sidon since if $A$ is a Sidon subset of a group $H$ then $|A|^2-|A|+1 \leq |H|$, but $(|H|-1)^2 - (|H|-1)+1 > |H|$.}

The set $\mathcal{T}:=\{1,3,9,27,\dots\}$ is an example of an infinite Sidon set in the integers.  While stability need not be preserved by passing to subsets, Sidonicity is and so every subset of $\mathcal{T}$ is also Sidon and \emph{a fortiori} $3$-stable, so $\mathcal{S}(\Z)$ is uncountable.  On the other hand there are countably many cosets of subgroups of $\Z$ and so $\mathcal{W}(\Z)$ is countable, and we conclude that $\mathcal{S}(\Z)\neq \mathcal{W}(\Z)$.

In view of the above discussion it is tempting to ask for families of sets in $\mathcal{S}_{k+1}(G)$ that are not in the ring generated by $\mathcal{S}_k(G)$ for $k>2$ -- the `irreducible' elements of $\mathcal{S}_{k+1}(G)$.

Cohen's idempotent theorem \cite[\S3.1.3]{rud::1} tells us that $\mathcal{W}(G)$ is equal to the Fourier algebra $\mathcal{A}(G)$.  To define the latter we take $G$ to be discrete and write $\wh{G}$ for the compact Abelian dual group of homomorphisms $G \rightarrow S^1:=\{z \in \C: |z|=1\}$, and if $\mu \in M(\wh{G})$ put
\begin{equation*}
\wh{\mu}(x):=\int{\lambda(x)d\mu(\lambda)} \text{ for all }x \in G.
\end{equation*}
For $f:G \rightarrow \C$, if there is some $\mu \in M(\wh{G})$ such that $f=\wh{\mu}$ then $\mu$ is unique \cite[\S1.3.6]{rud::1} and we put
\begin{equation*}
\|f\|_{B(G)}:=\|\mu\|:=\int{d|\mu|}.
\end{equation*}
With this we write
\begin{equation*}
\mathcal{A}(G):=\left\{A \subset G: \|1_A\|_{B(G)} < \infty\right\},
\end{equation*}
 which it turns out is a translation-invariant \cite[\S1.3.3 (c)]{rud::1} ring of sets \cite[\S3.1.2]{rud::1}.

Since $\mathcal{A}(G)=\mathcal{W}(G) \subset \mathcal{S}(G)$, we see that if $\|1_A\|_{B(G)} < \infty$ then $A \in \mathcal{S}(G)$, and it is natural to wonder if there is a universal monotonic $F:\R_{\geq 1} \rightarrow \N$ such that
\begin{equation}\label{eqn.mono}
\|1_A\|_{B(G)} \leq M \Rightarrow A \in \mathcal{S}_{F(M)}(G).
\end{equation}
There is fairly direct approach via a quantitative version of Cohen's theorem.  To describe this we make a definition: given $H \leq G$ and $\mathcal{S} \subset G/H$ we write\footnote{For $S \subset G$ we write $\neg S:=G \setminus S$.} $\mathcal{S}^*:=\mathcal{S} \cup \left\{\neg \bigcup{\mathcal{S}}\right\}$, that is the partition of $G$ into cells from $\mathcal{S}$ and an additional cell that is everything else.  We say that $A$ has a \textbf{$(k,s)$-representation} if there are subgroups $H_1,\dots,H_k \leq G$, and sets $\mathcal{S}_1\subset G/H_1,\dots,\mathcal{S}_k\subset G/H_k$ of size at most $s$ such that $A$ is the (disjoint) union of some cells in the partition\footnote{Recall that if $\mathcal{P}$ and $\mathcal{Q}$ are partitions of the same set then $\mathcal{P} \wedge \mathcal{Q}:=\{P \cap Q: P \in \mathcal{P}, Q \in \mathcal{Q}\}$.} $\mathcal{S}_1^* \wedge \cdots \wedge \mathcal{S}_k^*$.

\begin{theorem}[Quantitative idempotent theorem, {\cite[Theorem 1.2]{gresan::0}}]\label{thm.quantcoh}
Suppose that $\|1_A\|_{B(G)} \leq M$.  Then $A$ has a $(k,s)$-representation where
\begin{equation*}
k \leq M+O(1) \text{ and }s \leq \exp(\exp(O(M^4))).
\end{equation*}
\end{theorem}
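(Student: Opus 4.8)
The plan is to follow the strategy of Green and Sanders: reduce to the case that $G$ is finite, and then run an induction on $\lceil M\rceil$ in which, at each step, one produces a subgroup of controlled index with respect to which $1_A$ decomposes in a way that strictly lowers its Fourier--Stieltjes norm.

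First I would reduce to finite $G$. Since $G$ is discrete, $\widehat{G}$ is compact, and by approximating the representing measure $\mu \in M(\widehat{G})$ of $1_A$ by discrete measures --- equivalently, by pushing $1_A$ forward along quotient maps $G \to G/K$ for suitable finite-index $K \le G$ --- one may assume $G$ is finite at the cost of an arbitrarily small loss in $M$, which is absorbed into the $O(1)$'s. (One must check that the pushed-forward function is still a $\{0,1\}$-valued indicator and that its $B$-norm grows by at most $o(1)$.) For finite $G$ the quantity $\|1_A\|_{B(G)}$ is the $\ell^1$-norm of the normalised Fourier transform of $1_A$, and a $(k,s)$-representation of $A$ is precisely a way of writing $1_A$ as a $\{0,\pm 1\}$-combination of coset indicators in which only $k$ distinct subgroups occur and each contributes at most $s$ cosets.

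The base of the induction rests on the classical gap phenomenon for idempotents: $\|1_A\|_{B(G)} \ge 1$ for nonempty $A$, with equality exactly when $A$ is a coset, and moreover there is an absolute $\epsilon_0 > 0$ with $\|1_A\|_{B(G)} \ge 1 + \epsilon_0$ unless $A$ is empty or a coset; so if $M < 1 + \epsilon_0$ then $A$ already has a $(1,1)$-representation. For the inductive step, given $\|1_A\|_{B(G)} \le M$ with $M \ge 1 + \epsilon_0$, the aim is to find a subgroup $H \le G$ of index at most some explicit, rapidly growing $I(M)$ such that $1_A$ splits along the cosets of $H$ --- after possibly subtracting a union of boundedly many $H$-cosets --- into pieces each of $B$-norm at most $M - \epsilon_0$, the exceptional pieces being empty or full cosets. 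Feeding the inductive hypothesis into each piece and reassembling produces a representation of $A$: since the norm drops by at least $\epsilon_0$ at each stage and never falls below $1$, the recursion has depth $O(M)$, so the number $k$ of subgroups appearing is $O(M)$ --- a more careful accounting, charging roughly a unit of norm to each newly introduced subgroup, sharpens this to $k \le M + O(1)$ --- while tracking the dependence of the indices $I(M)$ on $M$ and on the (possibly doubly-exponentially small) densities through Chang's theorem, and composing over the $O(M)$ levels, produces $s \le \exp(\exp(O(M^4)))$.

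The crux is the inductive step --- extracting from the bare bound $\|1_A\|_{B(G)} \le M$ a genuine finite-index subgroup along whose cosets the norm strictly decreases. The natural route runs through Bogolyubov's lemma together with Chang's spectral theorem (or a polynomial Freiman--Ruzsa-type input), which initially delivers only a large Bohr set on which $1_A$ is almost periodic; upgrading almost-periodicity along a Bohr set to a decomposition into cosets of an honest finite-index subgroup, with reduced norm on each piece, is the delicate part, and it is precisely here that the tower-type dependence on $M$ enters and that securing the exponent $O(M^4)$ in $s \le \exp(\exp(O(M^4)))$ --- rather than something worse --- demands the sharpest quantitative structure theorem available at each stage. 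A more clerical obstacle is checking that the reduction to finite $G$ is genuinely lossless, since $B(G)$ for infinite discrete $G$ is sensitive to the topology of $\widehat{G}$.
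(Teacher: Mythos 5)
First, a point of orientation: the paper does not prove this statement at all --- it is quoted verbatim from Green and Sanders \cite[Theorem 1.2]{gresan::0} and used as a black box, so there is no in-paper argument to measure your proposal against. What you have written is a reasonable thumbnail of the strategy of the cited paper (induction on the algebra norm, a decrement of roughly one unit of norm per newly introduced subgroup to account for $k \leq M+O(1)$, and Bogolyubov--Chang--Bourgain-system technology driving the doubly exponential bound on $s$). But as a proof it is missing its entire technical core: the inductive step --- extracting from the bare hypothesis $\|1_A\|_{B(G)} \leq M$ an honest finite-index subgroup along whose cosets the norm strictly drops --- is precisely the content of the Green--Sanders paper, and you name it as ``the delicate part'' without carrying any of it out. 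That makes the proposal a plan rather than a proof, and the one structural input you do assert for the base case (a gap $\|1_A\|_{B(G)} \geq 1+\epsilon_0$ unless $A$ is empty or a coset) is itself a nontrivial theorem (of Saeki) that would need to be imported, not asserted.

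Second, the one reduction you do spell out is wrong as stated. Pushing $1_A$ forward along a quotient $\pi: G \to G/K$ with $K$ of finite index yields $1_{\pi(A)}$, whose $B(G/K)$-norm is not controlled by $\|1_A\|_{B(G)}$ (a sparse $A$ can surject onto $G/K$); and even where it is controlled, a $(k,s)$-representation of $\pi(A)$ pulls back to cells built from subgroups of $G$ \emph{containing} $K$, hence of finite index, which can never recover the representation of, say, a finite nonempty $A \subset \Z$, whose representation must use the infinite-index subgroup $\{0\}$. This is exactly why the torsion-free directions require a separate and genuinely more involved argument in \cite{gresan::0}; the passage to finite groups is not an $o(1)$-loss formality. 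So even granting the (deferred) inductive step for finite $G$, the proposal as written does not yield the theorem for general discrete $G$, which is the generality the present paper needs.
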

The arguments of Terry and Wolf are also quantitative, and we record some of them in a slightly stronger form than they state.  We begin with a slight extension of \cite[Example 1]{terwol::0}.
\begin{lemma}[Unions of cosets]\label{ex.duc}
Suppose that $H \leq G$ and $\mathcal{S} \subset G/H$ has size $s$.  Then $\bigcup{\mathcal{S}}$ is $(s+1)$-stable.
\end{lemma}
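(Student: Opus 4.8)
The plan is to argue by contradiction, exploiting the fact that $A := \bigcup\mathcal{S}$ is a union of only $s$ cosets of $H$, together with the trivial observation that membership in $A$ depends only on the coset of $H$. Suppose $A$ has the $(s+1)$-order property, witnessed by vectors $a,b \in G^{s+1}$ with $a_i + b_j \in A$ if and only if $i \leq j$.

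First I would look at the $s+1$ elements $a_1 + b_1, a_1 + b_2, \dots, a_1 + b_{s+1}$: since $1 \leq j$ for every $j$, all of these lie in $A$, hence in one of the $s$ cosets of $H$ comprising $\mathcal{S}$. By pigeonhole two of them lie in the same coset, say $a_1 + b_j$ and $a_1 + b_{j'}$ with $j < j'$, and subtracting gives $b_j - b_{j'} \in H$. Consequently the elements $a_{j'} + b_j$ and $a_{j'} + b_{j'}$ differ by $b_j - b_{j'} \in H$, so they lie in the same coset of $H$; since $A$ is a union of cosets of $H$, either both belong to $A$ or neither does. But $a_{j'} + b_{j'} \in A$ because $j' \leq j'$, whereas $a_{j'} + b_j \notin A$ because $j' > j$ — a contradiction. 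Hence no such witnesses exist and $A$ is $(s+1)$-stable.

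I do not expect a genuine obstacle here; the only point requiring a little care is the bookkeeping that ``$A$ is a union of cosets of $H$'' means precisely $x \in A \iff x + h \in A$ for all $h \in H$, which is immediate since the cells of $G/H$ partition $G$. One could run the same argument symmetrically through the $a$-coordinate instead, but pigeonholing the $b_j$'s is the cleanest way to produce the strict inequality $j' > j$ that forces the contradiction. It is also worth remarking in passing that the bound $s+1$ is essentially sharp: a union of $s$ cosets can fail to be $s$-stable, consistent with Lemma \ref{lem.cs} in the case $s = 1$.
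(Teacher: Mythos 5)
Your proof is correct and is essentially identical to the paper's own argument: both pigeonhole the $s+1$ elements $a_1+b_1,\dots,a_1+b_{s+1}$ into the $s$ cosets comprising $\bigcup\mathcal{S}$, deduce $b_j + H = b_{j'}+H$ for some $j<j'$, and derive the contradiction $a_{j'}+b_j \in \bigcup\mathcal{S}$ from $\bigcup\mathcal{S}+H=\bigcup\mathcal{S}$. No gaps.
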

\begin{proof}
Suppose that $a,b \in G^{s+1}$ witness the $(s+1)$-order property in $\bigcup{\mathcal{S}}$.  By the pigeonhole principle there is some $1 \leq i < j \leq s+1$ such that $a_1+b_i$ and $a_1+b_j$ are in the same coset of $H$, whence $b_i+H=b_j+H$.  Since $a_j+b_j \in \bigcup{\mathcal{S}}$ we see that $a_j+b_i \in \bigcup{\mathcal{S}}+H=\bigcup{\mathcal{S}}$, a contradiction since $j>i$.
\end{proof}
In general the above lemma is best-possible as the next lemma shows when $G=\Z$, $H=\{0\}$, and $\mathcal{S}$ is the set of size-one subsets of an arithmetic progression.
\begin{lemma}[Arithmetic progressions]\label{lem.ap} Suppose that $A$ is an arithmetic progression of integers of size $r$.  Then $A$ has the $r$-order property.
\end{lemma}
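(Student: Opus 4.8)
The plan is simply to write down explicit witnesses to the $r$-order property. Write the progression as $A = \{c, c+d, c+2d, \dots, c+(r-1)d\}$ for some $c \in \Z$ and common difference $d$ (nonzero when $r \geq 2$; when $r = 1$ the choice of $d$ is immaterial). The observation guiding the choice of witnesses is that the relation $i \leq j$ should be detected by arranging $a_i + b_j = c + (j-i)d$: this element lies in $A$ precisely when $0 \leq j - i \leq r-1$, and since $1 \leq i,j \leq r$ the upper bound $j - i \leq r-1$ holds automatically, so membership in $A$ becomes equivalent to $i \leq j$ alone.

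Concretely, I would set $a_i := c - (i-1)d$ and $b_j := (j-1)d$ for $1 \leq i,j \leq r$, so that $a,b \in \Z^r$ and $a_i + b_j = c + (j-i)d$. The verification then splits into two cases: if $i \leq j$ then $0 \leq j-i \leq r-1$, hence $a_i + b_j \in A$; if $i > j$ then $j - i$ is a negative integer, and since $m \mapsto c + md$ is injective on $\Z$, the element $c + (j-i)d$ cannot lie in $A = \{c+md : 0 \leq m \leq r-1\}$. Thus $a$ and $b$ witness the $r$-order property.

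There is no substantive obstacle here: the entire content of the argument is the choice of witnesses making $a_i + b_j = c + (j-i)d$, after which both cases are immediate. One could equally first reduce to the case $c = 0$ using translation-invariance of the order property, but this simplification is cosmetic and buys nothing.
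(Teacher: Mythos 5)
Your proposal is correct and is essentially the paper's own argument: the paper takes $s_i:=x-id$ and $t_j:=jd$ so that $s_i+t_j=x+(j-i)d$, which is your choice of witnesses up to an index shift. The extra care you take with the case $i>j$ and with $d=0$ when $r=1$ is fine but not a substantive difference.
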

\begin{proof}
Write $A=\{x,x+d,\dots,x+(r-1)d\}$, and let $s_i:=x-id$ and $t_i=id$ for $1 \leq i \leq r$.  Then $s_i+t_j = x+(j-i)d \in A$ if and only if $i \leq j$, and so the vectors $s,t \in \Z^r$ so defined witness the $r$-order property in $A$.  (\emph{c.f.} \cite[Lemma 6.3]{sis::1}.)
\end{proof}
Quantitatively \cite[Lemma 1]{terwol::0} is about as good as one could hope -- it says if $A$ is $s$-stable then $\neg A$ is $(s+1)$-stable -- however we shall combine it with a multi-set version of \cite[Lemma 2]{terwol::0}.

Write $r(k_1,\dots,k_m)$ for the smallest natural number such that in any $m$ colouring of the complete graph on $r(k_1,\dots,k_m)$ vertices there is some $1 \leq q \leq m$ such that the $q$th colour class contains a complete graph on $k_{q}$ vertices.
\begin{lemma}\label{lem.multicolour}
Suppose that $\bigcup_{q=1}^m{A_q}$ has the $r(k_1+1,\dots,k_m+1)+1$-order property.  Then there is some $1 \leq q \leq m$ such that $A_{q}$ has the $k_{q}$-order property.
\end{lemma}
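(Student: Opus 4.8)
The plan is to argue by contrapositive, extracting from an order-property configuration in the union a monochromatic clique via Ramsey's theorem, and then checking that a monochromatic clique in the right colour gives the order property in one of the $A_q$. So suppose $\bigcup_{q=1}^m A_q$ has the $(r(k_1+1,\dots,k_m+1)+1)$-order property, witnessed by vectors $a,b \in G^N$ with $N = r(k_1+1,\dots,k_m+1)+1$, so that $a_i + b_j \in \bigcup_q A_q$ exactly when $i \le j$. The natural colouring is on pairs $i<j$: colour the edge $\{i,j\}$ by some $q$ for which $a_i + b_j \in A_q$ (breaking ties arbitrarily). This is an $m$-colouring of the complete graph on $N-1 \ge r(k_1+1,\dots,k_m+1)$ vertices — here I need to be slightly careful that Ramsey's theorem is being applied to $\{1,\dots,N\}$ or to an $(N-1)$-subset; shrinking to $\{2,\dots,N\}$ or $\{1,\dots,N-1\}$ as needed, we get $r(k_1+1,\dots,k_m+1)$ vertices and hence by definition a colour $q$ and a set $I$ of $k_q+1$ indices with every pair inside $I$ coloured $q$.

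Now I would restrict the witnessing vectors to $I$. Write $I = \{i_0 < i_1 < \dots < i_{k_q}\}$. For any pair $i_p < i_{p'}$ in $I$ we have $a_{i_p} + b_{i_{p'}} \in A_q$ by the colouring. The subtlety is the other direction: the definition of the order property for $A_q$ requires $a'_p + b'_{p'} \in A_q$ if and only if $p \le p'$, so I must also ensure membership fails when $p > p'$. Here I would simply drop the largest element of $I$ — working with $I' = \{i_0,\dots,i_{k_q-1}\}$, a set of $k_q$ indices — and set $a'_p := a_{i_p}$ and $b'_p := b_{i_{p+1}}$ for $0 \le p \le k_q - 1$ (reindexing to start at $1$). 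Then $a'_p + b'_{p'} = a_{i_p} + b_{i_{p'+1}}$; when $p \le p'$ we have $i_p < i_{p'+1}$ so this lies in $A_q$ (both indices are in $I$ since $p'+1 \le k_q$), while when $p > p'$, i.e. $p \ge p'+1$, we have $i_p \ge i_{p'+1}$, so $a'_p + b'_{p'} = a_{i_p} + b_{i_{p'+1}} \notin \bigcup_q A_q \supset A_q$ unless $i_p = i_{p'+1}$, which forces $p = p'+1$ and then $a_{i_p}+b_{i_p}$; but $a_i + b_i \in \bigcup_q A_q$ for all $i$, so this does lie in the union. That is the genuine obstacle, and it is exactly the "$+1$" bookkeeping that Terry and Wolf's original argument handles: one cannot demand strict exclusion on the diagonal, so one passes to a $k_q$-term (rather than $(k_q+1)$-term) configuration where the off-diagonal exclusion is available from indices strictly outside $I$ being avoided, or equivalently shifts the $b$-indices so that the diagonal of the new configuration sits on a strictly-increasing pair of old indices.

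Concretely, the cleanest route is: choose $I$ of size $k_q+1$ monochromatic in colour $q$, list it as $i_0 < \dots < i_{k_q}$, and define the new witnesses of length $k_q$ by $a'_p = a_{i_{p-1}}$ and $b'_p = b_{i_p}$ for $1 \le p \le k_q$. Then $a'_p + b'_{p'} = a_{i_{p-1}} + b_{i_{p'}}$, and since $p-1 < p'$ iff $p \le p'$, membership in $A_q$ holds precisely when $p \le p'$ (for $p \le p'$ the pair $\{i_{p-1}, i_{p'}\}$ is genuinely inside $I$ with $i_{p-1} < i_{p'}$ so coloured $q$; for $p > p'$ we have $i_{p-1} \ge i_{p'}$, and since $p - 1 \ge p' \ge 1$ and $p-1 \le k_q$, if $i_{p-1} > i_{p'}$ then $a_{i_{p-1}} + b_{i_{p'}} \notin \bigcup A_q$, while if $i_{p-1} = i_{p'}$ then $p = p'+1$ but that case does not arise since $p-1 \ge p'$ with equality only when $p = p'+1$, giving $a_{i_{p'}} + b_{i_{p'}}$ — so I do need $p-1 > p'$ strictly). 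This forces $p \ge p' + 2$, i.e. the diagonal $p = p'+1$ must be excluded too, so in fact I should take $I$ of size $k_q + 1$ and use only $k_q$ of the resulting products, shifting by two; I will fix the exact offset so that the diagonal of the new $k_q \times k_q$ array lands on pairs $(i_{p-1}, i_p)$ with strictly increasing indices of the same colour, and the strict lower triangle lands on pairs with strictly decreasing old indices, hence outside the union. Once the indices are lined up this way the verification is immediate, and the conclusion that $A_q$ has the $k_q$-order property follows. I would present this with a short explicit choice of offset and a one-line colouring definition, flagging that the whole content is Ramsey plus the index shift that Terry and Wolf use in \cite[Lemma 2]{terwol::0}, here run with distinct clique sizes.
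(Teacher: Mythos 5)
There is a genuine gap. You correctly identify the obstacle -- with the colouring ``edge $\{i,j\}$, $i<j$, gets the colour of some $A_q$ containing $a_i+b_j$,'' the membership of $a_{i_p}+b_{i_{p'}}$ in $A_q$ is guaranteed for $p<p'$ and non-membership is guaranteed for $p>p'$, but the diagonal $p=p'$ is uncontrolled (it lies in the union but possibly not in $A_q$). Your proposed repair, however, is never actually carried out and cannot be: if you set $a'_s = a_{i_{\sigma(s)}}$ and $b'_t = b_{i_{\tau(t)}}$ with indices drawn from the monochromatic clique $I$, then the order property forces $\sigma(t)<\tau(t)$ (from $s=t$) and $\tau(t)<\sigma(t+1)$ (from $s=t+1$, to land strictly below the old diagonal), so $\sigma(1)<\tau(1)<\sigma(2)<\tau(2)<\cdots$, which requires $2k_q$ distinct elements of $I$ rather than $k_q+1$. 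Your ``shift by two'' either runs out of indices or silently demands the larger Ramsey number $r(2k_1,\dots,2k_m)$, which is not what the statement provides. The closing promise to ``fix the exact offset'' is a promissory note for an offset that does not exist under your colouring.

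The missing idea is to build the shift into the colouring itself, which is exactly what the paper does and is why the hypothesis is the $(N+1)$-order property with $N=r(k_1+1,\dots,k_m+1)$: take witnesses $a,b\in G^{N+1}$ and colour the edge $ij$ (for $1\le i<j\le N$) by the smallest $q$ with $a_{i+1}+b_j\in A_q$. Now for clique elements $s_1<\dots<s_{k_q+1}$ one has $a_{s_p+1}+b_{s_{p'}}\in A_q$ when $p<p'$, while for $p\ge p'$ one has $s_p+1>s_{p'}$ and hence $a_{s_p+1}+b_{s_{p'}}\notin\bigcup_n A_n$; the dichotomy is now exact with no ambiguous diagonal. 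A single re-indexing $a'_i:=a_{s_i+1}$, $b'_j:=b_{s_{j+1}}$ then converts the strict inequality $p<p'$ into $i\le j$ and yields the $k_q$-order property in $A_q$ from a clique of size only $k_q+1$. Your write-up would become correct if you replaced your colouring by this shifted one; as it stands, the argument does not close.
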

\begin{proof}
Write $N:=r(k_1+1,\dots,k_m+1)$.  Since $\bigcup_{q=1}^m{A_q}$ has the $(N+1)$-order property there are vectors $a,b \in G^{N+1}$ so that we can colour the vertices of the complete graph on $\{1,\dots,N\}$ by giving the edge $ij$ (for $1 \leq i < j \leq N$) the colour of the smallest $q$ such that $a_{i+1} + b_{j} \in A_q$ -- this is an $m$-colouring of the complete graph on $N$ vertices.

By definition there is some $1 \leq q \leq m$ and a sequence $1 \leq s_1<\dots<s_{k_{q}+1} \leq N$ with $a_{s_i+1} + b_{s_j} \in A_{q}$ for all $1\leq i <j \leq k_{q}+1$.  On the other hand whenever $N \geq i\geq j \geq 1$ we have $a_{i+1} + b_j \not \in \bigcup_n{A_n}$ by the $N$-order property of $\bigcup_n{A_n}$, and hence $a_{s_i+1} + b_{s_j} \not \in A_{q}$ whenever $k_{q}+1 \geq i\geq j\geq 1$.  Finally let $a_i':=a_{s_i+1}$ and $b_i':=b_{s_{i+1}}$ for all $1 \leq i \leq k_{q}$, and note that $a_i'+b_j' \in A_{q}$ if and only if $i \leq j$ as required.
\end{proof}
We now use this lemma to compute an upper bound on the stability of a set in the coset ring based on the complexity of its representation.  To do this we need a bound on the multicolour Ramsey numbers.  The usual Erd{\H o}s-Szekeres argument gives (see \emph{e.g.} \cite[Corollary 3]{gregle::0}) that
\begin{equation}\label{eqn.ramsey}
r(k_1+1,\dots,k_m+1) \leq \frac{(k_1+\cdots + k_m)!}{k_1!\cdots k_m!}.
\end{equation}
Suppose now that $A$ has a $(k,s)$-representation as described before Theorem \ref{thm.quantcoh}.  Then each of the sets $\neg \bigcup{\mathcal{S}_i}$ is $s+2$ stable by Lemma \ref{ex.duc} and \cite[Lemma 1]{terwol::0}, so each cell in the partition $\mathcal{S}_1^* \wedge \cdots \wedge \mathcal{S}_k^*$ is an intersection of $k$ sets of stability at most $s+2$.  It follows from Lemma \ref{lem.multicolour} and (\ref{eqn.ramsey}) that $A$ is $t$-stable where
\begin{align*}
t & \leq r(\overbrace{r(\underbrace{s+3,\dots,s+3}_{k\text{ times}})+2,\dots,r(s+3,\dots,s+3)+2}^{(s+1)^k\text{ times}}) +2\\
& \leq r(\overbrace{k!^{s+2}+2,\dots,k!^{s+2}+2}^{(s+1)^k{\text{ times}}})+2 \leq 2^{k^{7ks}}.
\end{align*}
Plugging Theorem \ref{thm.quantcoh} into this shows that one may take $F$ in (\ref{eqn.mono}) with
\begin{equation}\label{eqn.pos}
F(M)=\exp(\exp(\exp(\exp(O(M^4))))).
\end{equation}
On the other hand, in some situations we can do far better: if $A$ is finite and $G$ is torsion-free then McGehee, Pigno and Smith's solution to Littlewood's conjecture\footnote{This theorem extends to $\wh{G}$ connected as noted in \cite[\S3, Remark (i)]{mcgpigsmi::}, and $\wh{G}$ is connected since $G$ is torsion-free by \cite[Theorem 2.5.6(c) \& Theorem 1.7.2]{rud::1}).} \cite[Theorem 2]{mcgpigsmi::} applies to show that if $\|1_A\|_{B(G)} \leq M$ then $|A|=\exp(O(M))$.  It follows from Lemma \ref{ex.duc} that $A$ is $\exp(O(M))$-stable.  This is far better than the bound in (\ref{eqn.pos}) and it is the main purpose of this note to prove a bound of this strength directly and in full generality:
\begin{theorem}\label{thm.l1stab} 
Suppose that $G$ is a (discrete) Abelian group and $A\subset G$ has $\|1_A\|_{B(G)} \leq M$.  Then $A$ is $(c_0\exp(\pi M)+1)$-stable where $c_0:=2^{-4}\exp(-\gamma)\pi =0.110\dots$ and $\gamma$ is the Euler--Mascheroni constant.
\end{theorem}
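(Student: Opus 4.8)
The plan is to prove the equivalent statement that if $A$ has the $k$-order property then $k-1 \leq c_0\exp(\pi M)$. So fix $a,b \in G^k$ witnessing the $k$-order property, so that $1_A(a_p+b_q)=1$ when $p\leq q$ and $0$ otherwise, and write $1_A = \wh{\mu}$ with $\|\mu\|=\|1_A\|_{B(G)}\leq M$. The engine is to evaluate, for a well-chosen array of real weights $(w_{pq})_{p,q=1}^{k}$, the quantity $\Sigma := \sum_{p,q=1}^{k} w_{pq}\,1_A(a_p+b_q)$ in two ways. On the one hand the order property gives $\Sigma = \sum_{p\leq q} w_{pq}$. On the other hand, since $1_A(a_p+b_q)=\int\lambda(a_p)\lambda(b_q)\,d\mu(\lambda)$, we have $\Sigma = \int\Psi\,d\mu$ with $\Psi(\lambda):=\sum_{p,q}w_{pq}\lambda(a_p)\lambda(b_q)$, whence $|\Sigma|\leq M\sup_\lambda|\Psi(\lambda)|$. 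It therefore suffices to produce weights for which $\sum_{p\leq q}w_{pq}$ has size $\sim k\log k$ while $\sup_\lambda|\Psi(\lambda)|$ has size $\sim k$, the two implied constants differing by a factor $\pi$. Note that no distinctness hypothesis on the sums $a_p+b_q$ is needed, since the first evaluation of $\Sigma$ uses only the values $1_A(a_p+b_q)$.

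For the weights I would take $w_{pq}$ to depend only on $n=q-p$: set $w_0:=0$ and, for $1\leq|n|\leq k-1$, let $w_n$ be, up to normalisation, the Fourier coefficients of the $(k-1)$st Ces\`aro (Fej\'er) mean of the sawtooth $s(\theta):=\tfrac12(\pi-\theta)$ on $(0,2\pi)$; concretely $w_n=(1-|n|/k)/n$, extended oddly in $n$. Then the triangular sum is $\sum_{p\leq q}w_{pq}=\sum_{n=0}^{k-1}(k-n)w_n=kH_{k-1}-O(k)$, where $H_{k-1}=\sum_{n=1}^{k-1}\tfrac1n=\log(k-1)+\gamma+o(1)$, contributing the main term $k(\log(k-1)+\gamma)$.

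To bound $\Psi$ I would pass to the circle. Fix $\lambda$, put $F(\theta):=\sum_{p=1}^{k}\lambda(a_p)e^{\mathrm{i}p\theta}$, $G(\theta):=\sum_{q=1}^{k}\lambda(b_q)e^{-\mathrm{i}q\theta}$ and $K(\theta):=\sum_{|n|<k}w_n e^{\mathrm{i}n\theta}$; extracting Fourier coefficients gives $\Psi(\lambda)=\tfrac1{2\pi}\int_0^{2\pi}F(\theta)G(\theta)K(\theta)\,d\theta$, so by Cauchy--Schwarz and Parseval $|\Psi(\lambda)|\leq\|K\|_\infty\|F\|_{L^2}\|G\|_{L^2}=k\|K\|_\infty$ (the coefficients of $F,G$ have modulus $1$). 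Now $K$ is a purely imaginary multiple of the $(k-1)$st Ces\`aro mean $\sigma_{k-1}=s*F_{k-1}$ of the sawtooth; since the Fej\'er kernel is a probability density and $\|s\|_{L^\infty(0,2\pi)}=\tfrac{\pi}{2}$, this forces $\|K\|_\infty\leq\pi$ with \emph{no} Gibbs overshoot — which is exactly why a Ces\`aro smoothing is used rather than a sharp cutoff of the harmonic weights $1/n$ (the latter would give $2\,\mathrm{Si}(\pi)>\pi$ and hence the wrong exponent). Thus $\sup_\lambda|\Psi(\lambda)|\leq\pi k$.

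Combining the two evaluations of $\Sigma$ gives $kH_{k-1}-O(k)\leq\pi kM$, i.e.\ $\pi M\geq H_{k-1}-O(1)>\log(k-1)+\gamma-O(1)$, which rearranges to $k-1\leq c_0\exp(\pi M)$ for an absolute constant $c_0$. The one genuinely delicate point — and the main obstacle — is to pin this constant down as exactly $c_0=2^{-4}e^{-\gamma}\pi$: this requires choosing the taper on $(w_n)$ so that $\|K\|_\infty=\pi$ on the nose while the taper stays concentrated near the top of the frequency band, so that $\sum_{p\leq q}w_{pq}$ loses as little as possible against $kH_{k-1}$. It may also be necessary to use that the negative-index weights $w_n$ enter only $\Psi$ and not the triangular sum, performing an "analytic completion" to minimise $\|K\|_\infty$ in the spirit of McGehee--Pigno--Smith. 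The sharp bookkeeping of the lower-order terms — in particular the source of the factor $2^{-4}$ — is where the real work lies; the rest of the argument is routine.
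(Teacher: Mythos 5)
Your scheme is sound and is, at bottom, a dual formulation of the paper's argument: the quantity $\Sigma=\sum_{p,q}w_{pq}1_A(a_p+b_q)$ is the Hilbert--Schmidt pairing of the half-graph matrix $Q=(1_A(a_p+b_q))_{p,q}$ with a Toeplitz test matrix $W$; your upper bound $|\Sigma|\leq Mk\|K\|_\infty$ uses the measure representation, Cauchy--Schwarz and Parseval exactly as the paper does (and is implied by trace duality together with the paper's bound $\|Q\|_{S_1}\leq kM$ on the trace norm, i.e.\ the sum of singular values, and the symbol bound $\|W\|_{\mathrm{op}}\leq\|K\|_\infty$); and your triangular sum is a lower bound for $\|Q\|_{S_1}$ certified by $W$. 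The paper instead computes $\|Q\|_{S_1}$ exactly: the singular values are $1/(2\cos\frac{\pi j}{2k+1})$ for $1\leq j\leq k$, found by diagonalising the tridiagonal matrix $Q^{-1}(Q^{-1})^t$, and their sum is at least $\frac{k}{\pi}(\log (kc_0^{-1})-\frac{1}{k})$. Your observation that the Fej\'er taper is what keeps the symbol bounded by $\pi$ rather than by the Gibbs constant $2\,\mathrm{Si}(\pi)$ is exactly the right point, and your argument does prove the theorem with the correct exponent $\pi$ and \emph{some} absolute constant.

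It does not, however, give the stated constant $c_0=2^{-4}e^{-\gamma}\pi\approx 0.110$, and I do not believe your suggested repairs will. With $w_n=(1-|n|/k)/n$ one computes $\sum_{p\leq q}w_{q-p}=kH_{k-1}-\frac{3}{2}(k-1)$ exactly, so your inequality rearranges to $k-1\leq e^{3/2-\gamma}e^{\pi M}$, and $e^{3/2-\gamma}\approx 2.52$ is a factor of about $23$ worse than $c_0$. Even an idealised taper losing nothing against $kH_{k-1}$ would only reach the constant $e^{-\gamma}\approx 0.56$, still a factor $16/\pi$ short; to recover $c_0$ you would need a Toeplitz certificate with $\|K\|_\infty\leq\pi$ whose pairing with $Q$ is at least $k(\log k+\gamma+\log\frac{16}{\pi})+o(k)$, i.e.\ one essentially attaining $\|W\|_{\mathrm{op}}\|Q\|_{S_1}$ -- but the extremiser in trace duality is the polar part of $Q$, which is not of your tapered-harmonic form, and nothing in your sketch produces a substitute. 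So your write-up proves the theorem with an unspecified absolute constant in place of $c_0$; the exact value of $c_0$ comes out of the paper's exact spectral computation, not out of a choice of taper.
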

\begin{proof}
Suppose that $a,b \in G^k$ witness the $k$-order property and consider
\begin{equation*}
P:\ell_2^k \rightarrow \ell_2^k;  v \mapsto \left(\sum_{m=1}^k{1_A(a_l+b_m)v_m}\right)_{l=1}^k,
\end{equation*}
where $\ell_2^k$ denotes $k$-dimensional complex Hilbert space.

We compute the trace norm of $P$ in two ways: one showing it is large by direct calculation as it is just the trace norm\footnote{The trace norm of the adjacency matrix of a graph is sometimes called the \textbf{graph energy} \cite{gut::}.} of (a variant of) the adjacency matrix of the half-graph; on the other hand it is small as a result of the hereditary smallness of the algebra norm.

Since $a$ and $b$ witness the order property, writing $Q$ for the matrix of $P$ with respect to the standard basis we have $Q_{ij}=1$ if $i \leq j$ and $0$ otherwise.  It happens to be easier to deal with $Q^{-1}$; for reference (which can be easily checked)
\begin{equation*}
Q=\left(\begin{array}{cccc}1 & 0  & \cdots & 0\\ \vdots & \ddots & \ddots & \vdots\\ \vdots & & \ddots & 0\\ 1 & \cdots & \cdots & 1\end{array}\right)\text{ and }Q^{-1}=\left(\begin{array}{ccccc}1 & 0 & \cdots & \cdots & 0\\ -1 & 1 &\ddots  && \vdots\\  0 & -1 & 1 & \ddots& \vdots \\ \vdots & \ddots & \ddots & \ddots & 0\\ 0 & \cdots & 0 & -1 & 1\end{array}\right).
\end{equation*}
It follows that
\begin{equation*}
Q^{-1}(Q^{-1})^t=\left(\begin{array}{ccccc}1 & -1 &0 & \cdots & 0\\ -1 & 2 &-1 & \ddots  & \vdots \\  0 & -1 & 2 & \ddots& 0 \\ \vdots &\ddots  & \ddots & \ddots &-1\\ 0 & \cdots & 0 & -1 & 2\end{array}\right).
\end{equation*}
Let\footnote{Similar spectral computations to those here may be found in \emph{e.g.} \cite[\S1.4.4]{brohae::}, though we followed \cite{elk::1}.} $\omega:=\exp\left(\frac{2\pi ij}{2k+1}\right)$ and $v:=(\omega,\dots,\omega^{k})$ so
\begin{equation*}
Q^{-1}(Q^{-1})^t(v+\overline{v})^t = (2-\omega-\omega^{-1})(v+\overline{v})^t.
\end{equation*}
Of course $2-\omega-\omega^{-1}=4\cos^2\left(\frac{\pi j}{2k+1}\right)$ which takes $k$ distinct values as $1 \leq j \leq k$.  It follows that the eigenvalues of $P^{-1}(P^{-1})^\ast$ are exactly the numbers $4\cos^2\left(\frac{\pi j}{2k+1}\right)$ for $1 \leq j \leq k$, and hence the eigenvalues of $(P^{-1}(P^{-1})^\ast)^{-1}=P^*P$ are the reciprocals of these.  These reciprocals, $1/4\cos^2\left(\frac{\pi j}{2k+1}\right)$ for $1 \leq j \leq k$, are themselves distinct and so have corresponding unit eigenvectors $v^{(1)},\dots,v^{(k)}$ (of $P^*P$) which are mutually perpendicular, as are the unit vectors $w^{(1)},\dots,w^{(k)}$ defined through
\begin{equation*}
Pv^{(j)} = \frac{1}{2\cos\left(\frac{\pi j}{2k+1}\right)}w^{(j)} \text{ for } 1 \leq j \leq k.
\end{equation*}
Since $\|1_A\|_{B(G)} \leq M$ there is some $\mu \in M(\wh{G})$ with $\|\mu\|\leq M$ and $1_A=\wh{\mu}$ so, in particular,
\begin{equation}\label{eqn.above}
1_A(a_l+b_m) = \int{\lambda(a_l)\lambda(b_m)d\mu(\lambda)} \text{ for all }1\leq l,m \leq k.
\end{equation}
Let $\lambda^a,\lambda^b \in \ell_2^k$ be defined by $\lambda^a_l:=\lambda(a_l)$ and $\lambda^b_m:=\lambda(-b_m)$ for $1 \leq l,m \leq k$ so that $\|\lambda^a\|_{\ell_2^k}=\|\lambda^b\|_{\ell_2^k}=\sqrt{k}$.  Then by (\ref{eqn.above}) and linearity we have
\begin{align*}
\langle w^{(j)},Pv^{(j)}\rangle_{\ell_2^k} & = \sum_{l=1}{w^{(j)}_l\overline{\sum_{m=1}^k{1_A(a_l+b_m)v^{(j)}_m}}}\\
& = \int{\left(\sum_{l=1}{w^{(j)}_l\lambda(-a_l)}\right)\overline{\left(\sum_{m=1}^k{\lambda(b_m)v^{(j)}_m}\right)d\mu(\lambda)}}\\
& =\int{\langle w^{(j)},\lambda^a\rangle_{\ell_2^k}\overline{\langle \lambda^b,v^{(j)}\rangle_{\ell_2^k}d\mu(\lambda)}} = \int{\langle w^{(j)},\lambda^a\rangle_{\ell_2^k}\langle v^{(j)},\lambda^b\rangle_{\ell_2^k} \overline{d\mu(\lambda)}},
\end{align*}
and hence (noting the left hand side is real so that the inequality makes sense)
\begin{align}\label{eqn.s}
\sum_{j=1}^k{\langle w^{(j)},Pv^{(j)}\rangle_{\ell_2^k}} & \leq \int{\sum_{j=1}^k{\left|\langle w^{(j)},\lambda^a\rangle_{\ell_2^k}\langle v^{(j)},\lambda^b\rangle_{\ell_2^k}\right|}d|\mu|(\lambda)}\\
\nonumber & \leq \int{\left(\sum_{j=1}^k{|\langle w^{(j)},\lambda^a\rangle_{\ell_2^k}|^2}\right)^{\frac{1}{2}}\left(\sum_{j=1}^k{|\langle v^{(j)},\lambda^b\rangle_{\ell_2^k}|^2}\right)^{\frac{1}{2}}d|\mu|(\lambda)}\\ & \nonumber = \int{\|\lambda^a\|_{\ell_2^k}\|\lambda^b\|_{\ell_2^k}d|\mu|(\lambda)} \leq kM.
\end{align}
This last equality is Parseval's identity (or the generalised Pythagorean theorem) applied with the two orthonormal bases $(w^{(j)})_{j=1}^k$ and $(v^{(j)})_{j=1}^k$ and the vectors $\lambda^a$ and $\lambda^b$ respectively.

In the other direction we have
\begin{align*}
\sum_{j=1}^k{\langle w^{(j)},Pv^{(j)}\rangle_{\ell_2^k}} = \sum_{j=1}^k{\frac{1}{2\left|\cos \frac{j\pi}{2k+1}\right|}} & = \frac{1}{2}\sum_{l=0}^{k-1}{\frac{1}{\sin \frac{(2l+1)\pi}{2(2k+1)}}}\\
&= \frac{2k+1}{\pi }\sum_{l=0}^{k-1}{\frac{1}{2l+1}}\\
& \qquad \qquad + \frac{1}{2}\sum_{l=0}^{k-1}{\left(\csc\left(\frac{\pi}{2}\cdot \frac{2l+1}{2k+1}\right) - \frac{2}{\pi \cdot \frac{2l+1}{2k+1}}\right)}.
\end{align*}
The function $x \mapsto \csc\left(\frac{\pi}{2}x\right) - \frac{2}{\pi x}$ is an increasing function on $\left[-\frac{\pi}{2},\frac{\pi}{2}\right]$ and so we can apply a standard integral estimate (the details of which we omit) to see that
\begin{equation*}
\frac{1}{2}\sum_{l=0}^{k-1}{\left(\csc\left(\frac{\pi}{2}\cdot \frac{2l+1}{2k+1}\right) - \frac{2}{\pi \cdot \frac{2l+1}{2k+1}}\right)} \geq \frac{2k+1}{4}\left(\frac{2}{\pi} \log \frac{4}{\pi} - \frac{3}{\pi(2k+1)}\right).
\end{equation*}
Again, omitting details, one can use inequalities of T{\'o}th \cite[Problem E3432(i)]{padmenwol::} to estimate the harmonic numbers (the $n$th of which we denote by $H_n$) and get
\begin{align*}
\frac{2k+1}{\pi }\sum_{l=0}^{k-1}{\frac{1}{2l+1}}& = \frac{2k+1}{\pi}\left(H_{2k}-\frac{1}{2}H_k\right) \geq \frac{2k+1}{2\pi}\left(\log k + \log 4 + \gamma \right).
\end{align*}
In view of these two calculations we conclude that
\begin{align*}
\sum_{j=1}^k{\langle w^{(j)},Pv^{(j)}\rangle_{\ell_2^k}} & \geq \frac{k}{\pi}\left(\log k c_0^{-1} - \frac{1}{k}\right).
\end{align*}
Finally, $\exp(-x) \geq 1-x$ and so the above along with (\ref{eqn.s}) rearranges to give the result.

\end{proof}
In the other direction we have the examples afforded by intervals.
\begin{example}\label{ex.long} Suppose that $k \in \N$ and $G:=\Z$.  Then there is a set $A\subset G$ such that $A$ is at best $(k+1)$-stable (meaning $A$ has the $k$-order property) and writing $M:=\|1_A\|_{B(G)}$ has
\begin{equation*}
k+1 \geq c_1\exp\left(\frac{\pi^2}{4}M\right) \text{ where}\footnote{So $\frac{c_1}{c_0}= \frac{4}{\pi}\prod_{m=1}^\infty{m^{-\frac{2}{4m^2-1}}} =2\prod_{m=1}^\infty{\frac{4m^2-1}{4m^{2\cdot \frac{4m^2}{4m^2-1}}}}= 0.789\dots$, where $c_0$ is the constant in Theorem \ref{thm.l1stab}.}\text{ } c_1:=2^{-2}\exp(-\gamma)\prod_{m=1}^\infty{m^{-\frac{2}{4m^2-1}}}=0.087\dots.
\end{equation*}
\end{example}
\begin{proof}
Put $A:=\{1,\dots,k\}$.  A short calculation shows that
\begin{equation*}
M=\int_0^1{\left|\sum_{n=1}^k{\exp(2\pi i n\theta)}\right|d\theta} = \int_0^1{\frac{|\sin (\pi k\theta)|}{\sin \pi \theta}d\theta}.
\end{equation*}
Szeg{\H o} in \cite[(R)]{sze::4} gives a beautiful evaluation of this quantity (in fact the cited formula is for $k$ odd, but the same argument works for any $k$ as noted in \cite[Remark 2, \S3]{sze::4}):
\begin{align*}
\int_0^1{\frac{|\sin (\pi k\theta)|}{\sin \pi \theta}d\theta}&=\frac{16}{\pi^2}\sum_{m=1}^\infty{\frac{H_{2mk}-\frac{1}{2}H_{mk}}{4m^2-1}}\\
& < \frac{8}{\pi^2}\sum_{m=1}^\infty{\frac{\log mk +\gamma + \log 4 + \frac{1}{16(mk)^2}}{4m^2-1}}\\
& \leq \frac{4}{\pi^2}(\log k+\gamma+\log 4) + \frac{8}{\pi^2}\sum_{m=1}^\infty{\frac{\log m}{4m^2-1}} + \frac{1}{4\pi^2k^2}
\end{align*}
where the first inequality follows from the inequalities of T{\'o}th \cite[Problem E3432(i)]{padmenwol::}.  By Lemma \ref{lem.ap}, $A$ has the $k$-order property and the result is proved.
\end{proof}
Thus certainly the exponent $\pi$ in Theorem \ref{thm.l1stab} cannot be improved past $\frac{\pi^2}{4}$.  That being said, the fact that these two numbers are close leads one to wonders if the proof of Theorem \ref{thm.l1stab} above is amenable to improvement by direct analysis in the case that we are close to equality in the inequalities used.

As far as we know a better result than Theorem \ref{thm.l1stab} may be true in the model setting $G=\F_2^n$ where there are no large arithmetic progressions -- the set used in Example \ref{ex.long}.

Since $\mathcal{S}(\Z) \neq \mathcal{A}(\Z)$ we know that there is no converse to Theorem \ref{thm.l1stab}.  The following lemma (\emph{c.f.} \cite{fab::0}) shows that this is so in essentially the worst possible way.  
\begin{example}
Suppose $q$ is a prime power.  Then for $G:=\Z/(q^2+q+1)\Z$ there is a Sidon (and \emph{a fortiori} $3$-stable) set $A\subset G $ of size $q+1$ such that $\|1_A\|_{B(G)} \geq \sqrt{|A|-1+o_{|A| \rightarrow\infty}(1)}$.
\end{example}
\begin{proof}
The perfect difference set construction of Singer \cite[p381]{sin::} gives a set $A$ of size $q+1$ in $G:=\Z/(q^2+q+1)\Z$ that is a Sidon set and a direct calculation shows that $1_A = \wh{\mu}$ for $\mu \in M(\wh{G})$ with
\begin{equation*}
\int{1_\Gamma d|\mu|} = \frac{\sqrt{q}}{q^2+q+1} |\Gamma \setminus \{0_{\wh{G}}\}| + \frac{q+1}{q^2+q+1}|\Gamma \cap \{0_{\wh{G}}\}|.
\end{equation*}
It follows that
\begin{equation*}
\|1_A\|_{B(G)}=\frac{q+1}{q^2+q+1} + \frac{q^2+q}{q^2+q+1}\sqrt{q}=\sqrt{q+o_{q \rightarrow \infty}(1)},
\end{equation*}
as claimed.
\end{proof}
Note if $A$ is $2$-stable then it is a coset of a subgroup (or empty) and so $\|1_A\|_{B(G)} \leq 1$.  On the other hand a careful accounting of the constants in \cite[(3.3)]{bou::7} shows that any finite $A \subset G$ has $\|1_A\|_{B(G)} \leq \sqrt{|A|-1+o_{|A|\rightarrow \infty}(1)}$ which matches our bound above up to the little-$o$ term.  In other words $3$-stable sets can have algebra norm essentially as large as possible for their size.

\section*{Acknowledgments}

My thanks to Gabriel Conant and Julia Wolf for helpful conversations on this topic, and to an anonymous referee for a very helpful report significantly improving the paper.

\bibliographystyle{halpha}

\bibliography{references}

\end{document}